\newtheorem{defn}{Definition}[section]
\newtheorem{theorem}{Theorem}[section]
\newtheorem{lemma}[theorem]{Lemma}
\newtheorem{cor}{Corollary}[theorem]
\newtheorem{prop}[theorem]{Proposition}
\newtheorem{ex}[theorem]{Example}
\theoremstyle{remark}
\newtheorem{remark}{Remark}[theorem]
\title{Sofic Lie Algebras}
\author{Cameron Cinel\thanks{Department of Mathematics, University of California, San Diego, La Jolla, CA 92093-0112, USA\ \ \ \ \ \ \ \ \ \ Email: ccinel@ucsd.edu}}
\date{March 3, 2022}
\newcommand{\RR}{\mathbb{R}}
\newcommand{\ep}{\varepsilon}
\newcommand{\CC}{\mathbb{C}}
\newcommand{\al}{\alpha}
\newcommand{\func}[5]{\begin{align*}
	#1:#2&\to#3\\
	#4&\mapsto#5
	\end{align*}}
\newcommand{\wtd}{\widetilde}
\newcommand{\tdth}{\wtd\Theta}
\newcommand{\NN}{\mathbb{N}}
\newcommand{\rk}{\text{rank}}
\newcommand{\gl}{\mathfrak{gl}}
\newcommand{\tr}{\text{tr}}
\newcommand{\SL}{\mathfrak{sl}}
\newcommand{\Id}{\text{Id}}
\newcommand{\spn}{\text{span}}
\newcommand{\hh}{\mathfrak{h}}
\newcommand{\nn}{\mathfrak{n}}
\newcommand{\id}{\text{id}}
\begin{document}
	\maketitle
	
	\begin{abstract}
		We introduce and study soficity for Lie algebras, modelled after linear soficity in associative algebras.
		We introduce equivalent definitions of soficity, one involving metric ultraproducts and the other involving almost representations.
		We prove that any Lie algebra of subexponential growth is sofic.
		We also prove that a Lie algebra over a field of characteristic 0 is sofic if and only if its universal enveloping algebra is linearly sofic.
		Finally, we give explicit families of almost representations for the Witt and Virasoro algebras.\\
		\newline
		\textbf{Key words and phrases}: linearly sofic algebra, universal enveloping algebra, metric ultraproduct, subexponential growth\\
		\newline
		\textbf{Mathematics Subject Classification}: 17B35, 03C20, 17B68
	\end{abstract}
	
	\section{Introduction}
	Sofic groups were first introduced by Misha Gromov \cite{gro99} in relation to the Gottschalk surjunctiviy surjecture conjecture (1976).
	The were later named sofic by Weiss \cite{wie00}.
	We recall the definitions of sofic groups via ultraproducts.
	For more details of ultraproducts in relation to soficity, see \S 2 of \cite{pes08}.
	
	For $n\in\NN$, we denote the symmetric group on $n$ letters by $S_n$.
	We consider the normalized Hamming distance
	\[d_n(\sigma,\tau)=\frac1n|\{i\mid \sigma(i)\neq\tau(i)\}|.\]
	\begin{defn}
		A group $G$ is sofic if there exists and ultrafilter $\omega$ on $\NN$ and a sequence of natural numbers $(n_k)$ such that $G$ embeds into the metric ultraproduct $\prod_{k\to\omega}(S_{n_k},d_{n_k})$.
	\end{defn}
	
	In \cite{ele05}, this definition was reformulated without the notion of ultraproducts.
	It was shown that a group is sofic if and only if it can be locally embedded in some symmetric group almost homomorphically, up to some arbitrarily small error.
	Similar ultraproducts can be constructed from any family of groups equipped with bi-invariant metrics, such as unitary groups over $\CC$ (where subgroups are called hyperlinear).
	
	In particular, for any field $F$, the group $GL_n(F)$ can be endowed the normalized rank metric
	\[\rho_n(A,B)=\frac1n\rk(A-B).\]
	Groups embedable into ultraproducts of $GL_n(F)$ with this metric are called linearly sofic.
	In \cite{arz17} the notion of linear soficity was extended to associative algebras by considering metric ultraproducts of $M_n(F)$.
	In particular, it was shown that a group $G$ is linearly sofic if and only if its group algebra $\CC[G]$ is linearly sofic.
	
	The goal of this paper to do analogous work for Lie algebras.
	Our first section deals with the ultraproduct construction and definition of soficity for Lie algebras as well as basic properties.
	We then provide an equivalent characterization for soficity using almost representations for our Lie algebra.
	
	We also provide examples of sofic Lie algebras using families of almost representations.
	In particular we prove one of our primary results
	\newtheorem*{thm:subexp}{Theorem \ref{thm:subexp}}
	\begin{thm:subexp}
		Any Lie algebra of subexponential growth is sofic.
	\end{thm:subexp}
	We also provide explicit almost representations to show soficity for the Witt and Virasoro algebras.
	This is analogous to soficity in groups and associative algebras, where amenability, and therefore subexponential growth, implies soficity.
	
	Finally, we show that soficity for Lie algebras and associative algebras is compatible.
	\newtheorem*{thm:ueasofic}{Theorem \ref{thm:ueasofic}}
	\begin{thm:ueasofic}
		Let $L$ be a Lie algebra over a field of characteristic 0. Then $L$ is sofic if and only if its universal enveloping algebra $U(L)$ is sofic.
	\end{thm:ueasofic}
	This result should be compared to other work on the growth of Lie algebras, notably the theorem in \cite{smi76} showing that Lie algebras of subexponential growth have universal enveloping algebras of subexponential growth.

	\section{Ultraproducts}
	We define our notion of soficity for Lie algebras using the ultra product construction, similar to that of sofic groups and associative algebras.
	For $n\in\NN$, we call the function \func{\rho_n}{M_n(F)}{[0,1]}{a}{\frac1n\rk(a)} the normalized rank function.
	Let $\omega$ be a non-principal ultrafilter on $\NN$ and $n_k$ be a sequence of natural numbers such that $\lim_{k\to\infty}n_k=\infty$.
	We can extend the $\rho_{n_k}$'s to a function \func{\rho_{n_k}}{M_{n_k}(F)}{[0,1]}{(a_k)}{\lim\limits_{k\to\omega}\rho_{n_k}(a_k).}
	In \cite{arz17}, the pre-image $\rho_\omega^{-1}(0)$ was shown to be an ideal.
	Thus we can construct the ultra-product of the matrix rings, denoted
	\[\prod_{k\to\omega}M_{n_k}(F)/\ker\rho_\omega:=\prod_{k=1}^\infty M_{n_k}(F)/\rho_\omega^{-1}(0).\]
	As an associative algebra, this ultra-product has a natural Lie algebra structure via the commuatator bracket, which we denote
	\[\prod_{k\to\omega}\gl_{n_k}(F)/\ker\rho_\omega:=\left(\prod_{k\to\omega}M_{n_k}(F)/\ker\rho_\omega\right)^{(-)}.\]
	
	\begin{defn}
		We denote the metric ultraproduct of the Lie algebras $\gl_{n_k}(F)$ as $\prod_{k\to\omega}\gl_{n_k}(F)/\ker\rho_\omega$, called a universal sofic Lie algebra.
	\end{defn}
	
	For any subalgebra $L\subset \gl_n(F)$, we can restrict the normalized rank map and hence create different metric ultraproducts.
	
	\begin{lemma}
		For any field $F$,
		\[\prod_{k\to\omega}\gl_{n_k}(F)/\ker\rho_\omega\cong\prod_{k\to\omega}\SL_{n_k}(F)/\ker\rho_\omega.\]
	\end{lemma}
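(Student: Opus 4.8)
The plan is to exhibit the isomorphism as the map induced by the coordinatewise inclusions $\SL_{n_k}(F)\hookrightarrow\gl_{n_k}(F)$. The first step is to check that this gives a well-defined Lie algebra homomorphism between the two ultraproducts. Since each $\SL_{n_k}(F)$ is a Lie subalgebra of $\gl_{n_k}(F)$ (in fact $[\gl_{n_k}(F),\gl_{n_k}(F)]\subseteq\SL_{n_k}(F)$), and since the rank function used to build the $\SL$-ultraproduct is literally the restriction of $\rho_{n_k}$, the inclusion $\prod_{k=1}^\infty\SL_{n_k}(F)\hookrightarrow\prod_{k=1}^\infty\gl_{n_k}(F)$ carries the defining ideal of the source into $\rho_\omega^{-1}(0)$ of the target. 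Hence it descends to a Lie algebra homomorphism $\Phi$ between the quotients.

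Injectivity of $\Phi$ is then automatic: its kernel is $\bigl(\prod_k\SL_{n_k}(F)\cap\rho_\omega^{-1}(0)\bigr)/\rho_\omega^{-1}(0)$, which vanishes precisely because the ideal defining the $\SL$-ultraproduct is the restriction of the ideal defining the $\gl$-ultraproduct. So the entire content of the lemma is surjectivity, and this is the step I would spend the most care on.

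For surjectivity the key observation is that any matrix can be made traceless by subtracting a matrix of rank at most one, whose normalized rank therefore vanishes in the ultralimit. Concretely, given a representative $(a_k)$ of a class in $\prod_{k\to\omega}\gl_{n_k}(F)/\ker\rho_\omega$, let $e_k\in M_{n_k}(F)$ be the matrix whose only nonzero entry is $\tr(a_k)$ in the $(1,1)$ position, and set $b_k=a_k-e_k$. Then $\tr(b_k)=0$, so $(b_k)\in\prod_k\SL_{n_k}(F)$, while $a_k-b_k=e_k$ has $\rk(e_k)\le 1$. Consequently $\rho_{n_k}(a_k-b_k)\le 1/n_k$, and since $n_k\to\infty$ along $\omega$ we obtain $\lim_{k\to\omega}\rho_{n_k}(a_k-b_k)=0$. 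Thus $(a_k)$ and $(b_k)$ represent the same class in the $\gl$-ultraproduct, and $\Phi$ maps the class of $(b_k)$ to the class of $(a_k)$, proving surjectivity.

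I do not expect a genuine obstacle here, but the one point that deserves emphasis is why the result is not a triviality: passing from $\gl_{n}(F)$ to $\SL_{n}(F)$ deletes the scalar matrices, and the identity $\Id$ has normalized rank $1$, so it is far from negligible in the rank metric. The resolution is that one never needs to kill the whole scalar direction—only the trace—and the trace can be absorbed by the rank-one correction $e_k$, whose normalized rank is $O(1/n_k)\to 0$. This is exactly the phenomenon that forces the two ultraproducts to coincide.
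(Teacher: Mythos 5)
Your proof is correct and takes essentially the same approach as the paper: the inclusion of the $\SL$-ultraproduct into the $\gl$-ultraproduct is shown to be surjective by replacing a representative $(a_k)$ with $(a_k - E_{11}\tr(a_k))$, a traceless sequence differing by rank at most one, hence by normalized rank $1/n_k \to 0$. Your additional remarks on injectivity and on why the scalar direction is not an obstruction are sound elaborations of the same argument.
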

	
	\begin{proof}
		Clearly, we can embed $\prod_{k\to\omega}\SL_{n_k}(F)/\ker\rho_\omega\hookrightarrow\prod_{k\to\omega}\gl_{n_k}(F)/\ker\rho_\omega$.
		Moreover, for any sequence $(A_k)\in\prod_{k\to\omega}\gl_{n_k}(F)/\ker\rho_\omega$, we can consider the second sequence $(A_k-E_{11}\tr(A))$.
		Then
		\[\rho_\omega((A_k)-(A_k-E_{11}\tr(A)))=\lim_{k\to\omega}\frac{1}{n_k}=0\] so $(A_k)=(A_k-E_{11}\tr(A))$ in the ultra-product.
		Thus the natural embedding is a surjection.
	\end{proof}

	We now define the map object of this paper.

	\begin{defn}
		A Lie algebra $L$ over a field $F$ is called linearly sofic if there exists a Lie algebra embedding of $L$ into some metric ultraproduct of some $\gl_{n_k}(F)$'s.
	\end{defn}

	\begin{prop}
		For an algebraically closed field $F$, the associative algebra $\prod_{k\to\omega}M_{n_k}(F)/\ker\rho_\omega$ is simple.
	\end{prop}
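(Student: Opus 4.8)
The plan is to show that every nonzero element of the algebra in question, which I will abbreviate as $\mathcal{A}=\prod_{k\to\omega}M_{n_k}(F)/\ker\rho_\omega$, generates all of $\mathcal{A}$ as a two-sided ideal. Since the full product $\prod_{k=1}^\infty M_{n_k}(F)$ is unital and $\rho_\omega((I_{n_k}))=\lim_{k\to\omega}1=1\neq 0$, the quotient $\mathcal{A}$ is a nonzero unital algebra with unit $1=(I_{n_k})$. Thus it suffices to prove that $1$ lies in the two-sided ideal generated by an arbitrary nonzero $a\in\mathcal{A}$. Writing $a=(a_k)$, the hypothesis $a\neq 0$ means $a\notin\ker\rho_\omega$, i.e. $\rho_\omega(a)=\lim_{k\to\omega}\frac1{n_k}\rk(a_k)=c>0$. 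Hence the set of indices with $\rk(a_k)\geq\frac c2 n_k$ lies in $\omega$; I set $r_k=\rk(a_k)$ and note that the remaining, $\omega$-negligible, indices may be ignored throughout.

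First I would reduce $a$ to a standard idempotent. Over any field, a matrix of rank $r_k$ admits a rank normal form: there exist invertible $U_k,V_k\in GL_{n_k}(F)$ with $U_k a_k V_k=e_k$, where $e_k=\sum_{i=1}^{r_k}E_{ii}$ is the coordinate projection of rank $r_k$. Taking $\tilde U=(U_k)$ and $\tilde V=(V_k)$ in the product, the element $e:=\tilde U\, a\,\tilde V=(e_k)$ lies in the two-sided ideal generated by $a$, and $\rho_\omega(e)=c$. So it is enough to extract $1$ from the ideal generated by $e$.

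The heart of the argument is then a covering computation that produces the identity from $e$ using a number of terms that does not depend on $k$. Fix $N=\lceil 2/c\rceil$. For $\omega$-almost all $k$ we have $N r_k\geq n_k$, so $\{1,\dots,n_k\}$ partitions into blocks $B^{(k)}_1,\dots,B^{(k)}_N$ each of size at most $r_k$. Enumerating $B^{(k)}_j=\{i_1,\dots,i_m\}$ with $m\leq r_k$, I set $P^{(k)}_j=\sum_{s=1}^m E_{i_s,s}$ and $Q^{(k)}_j=\sum_{s=1}^m E_{s,i_s}$; since each index $s\leq r_k$, the matrix-unit relations give $P^{(k)}_j e_k Q^{(k)}_j=\sum_{i\in B^{(k)}_j}E_{ii}$, the projection onto the $j$-th block. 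Summing over $j$ yields $\sum_{j=1}^N P^{(k)}_j e_k Q^{(k)}_j=\sum_{i=1}^{n_k}E_{ii}=I_{n_k}$ for $\omega$-almost all $k$. Consequently, with $\tilde P_j=(P^{(k)}_j)$ and $\tilde Q_j=(Q^{(k)}_j)$, the identity $\sum_{j=1}^N \tilde P_j\, e\,\tilde Q_j=1$ holds in $\mathcal{A}$, so $1$ lies in the ideal generated by $e$, hence in the ideal generated by $a$, and $\mathcal{A}$ is simple.

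The step I expect to require the most care is precisely the uniformity of the tile count $N$. Because rank is subadditive, any representation of $I_{n_k}$ as $\sum_j x_j e_k y_j$ needs at least $n_k/r_k$ summands, and it is exactly the strict positivity $c>0$ of $\rho_\omega(a)$ that keeps $n_k/r_k$ bounded, allowing the partial-permutation blocks $P^{(k)}_j,Q^{(k)}_j$ to be assembled into genuine elements $\tilde P_j,\tilde Q_j$ of the ultraproduct; an unbounded number of terms would produce no element of $\mathcal{A}$ at all. One should also confirm that altering the sequences on the $\omega$-negligible bad indices leaves their classes in $\mathcal{A}$ unchanged, which is immediate since $\omega$ is an ultrafilter. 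I note in passing that this argument invokes nothing about $F$ beyond its being a field, since both rank normal form and the matrix-unit identities hold over any field; algebraic closedness thus appears inessential for simplicity per se, and is presumably imposed here for consistency with the remainder of the development.
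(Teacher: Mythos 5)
Your proof is correct, and its skeleton is the same as the paper's: both arguments turn on the fact that $\rho_\omega(a)=c>0$ bounds $n_k/\rk(a_k)$ uniformly on a set in $\omega$, so the identity can be written in the ideal generated by $a$ using a number of summands independent of $k$. The executions differ in two ways worth noting. The paper reduces the representative $B_k$ via its Jordan normal form and produces $n$ translated rank-$\lfloor n_k/n\rfloor$ projections $A^{(m)}_k$ whose sum agrees with $I_{n_k}$ only up to a matrix of rank at most $n-1$; it then needs a final limiting argument to see that this error dies in the ultraproduct. You instead pass to the rank normal form idempotent $e_k=\sum_{i=1}^{r_k}E_{ii}$ and use the partial permutation matrices $P^{(k)}_j,Q^{(k)}_j$ to reconstitute $I_{n_k}$ \emph{exactly} for $\omega$-almost every $k$, which eliminates that last step (and the paper's slightly delicate verification that the set where the error is small belongs to $\omega$). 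You are also right that algebraic closedness is inessential: rank normal form and the matrix-unit computation are valid over any field, whereas the paper's appeal to Jordan form nominally requires $F$ algebraically closed even though only the rank of $B_k$ is ever used. The one small point you pass over quickly is the existence of the partition of $\{1,\dots,n_k\}$ into $N$ blocks of size at most $r_k$; this follows from $\lceil n_k/N\rceil\leq r_k$ on the relevant ultrafilter set, so the gap is cosmetic. Net: same core idea, but your version is cleaner and slightly more general.
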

	\begin{proof}
		If $\omega$ is a principal ultrafilter, this is trivially true.
		So we assume that $\omega$ is free.
		
		Denote $R:=\prod_{k\to\omega}M_{n_k}(F)/\ker\rho_\omega$.
		Suppose $0\neq x\in R$.
		Let $\delta:=\rho_\omega(x)$.
		Since $\delta>0$, there exists $n\in\NN$ such that $\frac1n<\delta$.
		Thus, if $(B_k)\in\prod M_{n_k}(F)$ is a representative for $x$, there exists some $S\in\omega$ such that $\rho_{n_k}(B_k)\geq\frac1n$ for every $k\in S$.
		
		Let $J_k$ be the Jordan normal form of $B_k$.
		Then $J_k$ row equivalent to 
		\[A^{(m)}_k:=\sum_{i=1}^{\lfloor\frac{n_k}{n}\rfloor}E_{\lfloor\frac{mn_k}{n}\rfloor+i,\lfloor\frac{mn_k}{n}\rfloor+i}\]
		where $0\leq m\leq n-1$.
		Thus, there exists $P_k,Q_k\in M_{n_k}(F)$ such that
		\[B_k=P_kA^{(m)}_kQ_k.\]
		
		Define $C_k\in M_{n_k}(F)$ via
		\[C_k=\begin{cases}
		\sum_{m=0}^{n-1}A^{(m)}_k,& k\in S\\
		0,& k\in S^C
		\end{cases}.\]
		Then if $y=[C_k]\in R$, we have that $y\in (x)$, the ideal generated by $x$.
		Moreover, we notice that for $k\in S$,
		\[\rk(C_k-I_{n_k})\leq n-1.\]
		Let $\ep>0$.
		Since $\omega$ is a free ultrafilter, $S$ must be infinite.
		Therefore, there exists an infinite subset $S'\subset S$ such that $\rho_{n_k}(C_k-I_{n_k})<\ep$.
		Notice that $S'\in\omega$ since otherwise, $(S')^C\cap S\in\omega$ making $\omega$ a principal ultrafilter.
		Therefore $\rho_\omega(y-[I_{n_k}])=0$ and $(x)=R$.
	\end{proof}
		
	\begin{cor}
		For an algebraically closed field $F$, the non-trivial ideals of $\prod_{k\to\omega}\gl_{n_k}(F)/\ker\rho_\omega$ are contained in its center.
	\end{cor}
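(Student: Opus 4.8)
The plan is to read ``ideal'' as a Lie ideal of the Lie algebra $L:=\prod_{k\to\omega}\gl_{n_k}(F)/\ker\rho_\omega$, ``non-trivial'' as proper and nonzero, and the center as $Z(L)=\{z\in L:[z,x]=0\text{ for all }x\in L\}$, which coincides with the center of the underlying simple associative algebra $R:=\prod_{k\to\omega}M_{n_k}(F)/\ker\rho_\omega$. The statement is then equivalent to the dichotomy that every Lie ideal of $L$ is either contained in $Z(L)$ or equal to $L$, so I would fix a Lie ideal $I\not\subseteq Z(L)$ and aim to prove $I=L$.

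The first step is to identify $L$ with the commutator space $[R,R]$, i.e.\ to show that $L$ is perfect. Using the previous lemma, every element of $L$ has a representative $(B_k)$ with each $B_k$ traceless, since subtracting $E_{11}\tr(B_k)$ changes the class by a rank-one (hence $\rho_\omega$-null) amount. By the Albert--Muckenhoupt theorem, over any field every traceless matrix is a single commutator (this refines the factorwise identity $\SL_{n_k}=[\gl_{n_k},\gl_{n_k}]$), so $B_k=[P_k,Q_k]$ for each $k$; this exhibits an arbitrary element of $L$ as $[(P_k),(Q_k)]$, whence $L=[L,L]=[R,R]$.

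With this in hand I would invoke the simplicity of $R$ (the preceding Proposition) together with the structure theory of Lie ideals of simple associative rings: for a simple ring of characteristic $\neq 2$, a Lie ideal is either central or contains $[R,R]$. Since $[R,R]=L$, a non-central Lie ideal $I$ must satisfy $I\supseteq[R,R]=L$, so $I=L$, which is the desired conclusion. The heart of this step is the inner-derivation computation $2[x,a]a=[x,a^2]+[[x,a],a]$, which for a non-central $x\in I$ forces products $[x,a]a$ back into $I$ and ultimately drags all of $[R,R]$ into $I$.

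The main obstacle is exactly this last step. If one wishes to be self-contained rather than cite Herstein's theorem, one must carefully run the commutator gymnastics that promote a single non-central element of $I$ to the full commutator space; and one must take care in characteristic $2$, where the generic statement for simple rings genuinely acquires exceptions and the identity above loses its force after dividing by $2$. For the intended applications it is cleanest either to restrict to $\mathrm{char}\,F\neq 2$ or to verify the characteristic-$2$ case directly using the concrete matrix structure of the ultraproduct, but in all cases the simplicity of $R$ supplied by the Proposition and the perfectness $L=[R,R]$ are the two inputs that make the argument go through.
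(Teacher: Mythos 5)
Your proposal is correct and takes essentially the same route as the paper: the paper's entire proof is the single citation of Herstein's Theorem~2 on Lie ideals of simple rings, applied to the simplicity of $R$ established in the preceding proposition, and your argument is exactly that theorem plus the perfectness $L=[R,R]$ (via Albert--Muckenhoupt) spelled out explicitly. Your caveat about characteristic $2$ is well taken and applies equally to the paper's own one-line citation, which is silent on that hypothesis.
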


	\begin{proof}
		This follows from Theorem 2 of \cite{her61}.
	\end{proof}

	\section{Almost representations}
	Just as with sofic groups and linearly sofic associative algebras, linearly sofic Lie algebras can be defined via families of maps that are Lie algebra homomorphisms up to a small error.
	We follow a similar approach to section 11.2 of \cite{arz17}.
	
	\begin{defn}
		Let $L$ be a Lie algebra, $W\subset L$ a finite dimensional subspace, $V$ a finite dimensional vector space, and $\ep>0$.
		A linear map $\varphi:W\to\gl(V)$ is called and $\ep$-almost representation of $W$ if there exists a subspace $V_\ep\subset V$ such that:
		\begin{enumerate}
			\item for all $x,y\in W$ such that $[x,y]\in W$,
			\[\varphi([x,y])|_{V_\ep}=(\varphi(x)\varphi(y)-\varphi(y)\varphi(x))|_{V_\ep};\]
			\item $\dim V$-$\dim V_\ep\leq\ep\cdot\dim V$.
		\end{enumerate}
	\end{defn}

	Clearly a morphism from a Lie algebra to a universal sofic Lie algebra gives rise to a family of almost representations of the Lie algebra.
	Similarly, a family of almost representations on a covering of a Lie algebra gives a morphism to a universal sofic Lie algebra.
	However, not all families of almost representations will correspond to embeddings.
	In particular, some elements of our Lie algebra will have to be mapped to arbitrarily small rank transformations via the family of almost representations.
	This gives rise to to the following subspace of our Lie algebra.
	
	\begin{defn}
		For a Lie algebra $L$, the sofic radical of $L$, denote $SR(L)$, is defined as follows: $p\in SR(L)$ if for every $\delta>0$, there exists a finite dimensional $W\subset L$ containing $p$ and $n_\delta>0$ such that if $0<\ep<n_\delta$ and $\varphi_{W,\ep}:W\to\gl(V)$ is an $\ep$-almost representation, then
		\[\dim\text{Im }\varphi_{W,\ep}(p)<\delta\cdot\dim V.\]
	\end{defn}
	
	The sofic radical is essentially the collection of "bad" elements of our Lie algebra when it comes to trying to make embeddings of it into a universal sofic Lie algebra.
	This view is summarized by the following lemma and corollary.
	
	\begin{lemma}
		For a Lie algebra $L$, $p\in SR(L)$ if and only if for every Lie algebra homomorphism
		\[\Theta:L\to\prod_\omega \gl_{n_k}(F)/\rho_\omega\] we have that $\Theta(p)=0$.
	\end{lemma}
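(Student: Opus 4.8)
The plan is to prove the two implications separately, moving back and forth between homomorphisms into the ultraproduct and families of $\ep$-almost representations. Throughout, for a homomorphism $\Theta:L\to\prod_\omega\gl_{n_k}(F)/\rho_\omega$ I write $\Theta(x)=[(A_k^x)]$ for fixed representatives, chosen on a basis of any relevant finite-dimensional subspace so that the component maps $\varphi_k:=x\mapsto A_k^x$ are honestly linear and $[(\varphi_k(x))]=\Theta(x)$.

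For the forward direction, suppose $p\in SR(L)$ and fix such a $\Theta$; I want $\rho_\omega(\Theta(p))\le\delta$ for every $\delta>0$. Fix $\delta$, take the finite-dimensional $W\ni p$ and threshold $n_\delta$ supplied by the definition of $SR(L)$, and consider $\varphi_k|_W:W\to\gl_{n_k}(F)$. The one genuine difficulty is to check that, for $k$ in an $\omega$-large set, $\varphi_k|_W$ is an $\ep$-almost representation: condition (1) must hold on a \emph{single} subspace $V_\ep$ for \emph{all} pairs $x,y\in W$ with $[x,y]\in W$, of which there are infinitely many. The key trick is to linearize the defect. Let $\mu:W\otimes W\to L/W$ be induced by the bracket and set $K=\ker\mu$, a finite-dimensional space of dimension $r$ independent of $k$. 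On $K$ the assignment $\xi=\sum_s x_s\otimes y_s\mapsto\tilde\Delta_k(\xi):=\sum_s[A_k^{x_s},A_k^{y_s}]-A_k^{\sum_s[x_s,y_s]}$ is well defined (the second term lies in $W$ exactly because $\xi\in K$) and \emph{linear}; since $\Theta$ is a homomorphism, each generator is negligible, $\rho_\omega(\tilde\Delta_\bullet(\xi_s))=0$. Choosing $V_\ep=U_k:=\bigcap_{s=1}^r\ker\tilde\Delta_k(\xi_s)$ annihilates the defect of \emph{every} valid pair $x\otimes y\in K$ at once, and $\operatorname{codim}U_k\le\sum_s\rk\tilde\Delta_k(\xi_s)<\ep n_k$ on an $\omega$-large set. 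Thus $\varphi_k|_W$ is an $\ep$-almost representation there; taking $\ep<n_\delta$ and applying the defining property of $SR(L)$ gives $\rk A_k^p<\delta n_k$ on an $\omega$-large set, whence $\rho_\omega(\Theta(p))\le\delta$. Letting $\delta\to0$ yields $\Theta(p)=0$.

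For the converse I argue by contrapositive: assume $p\notin SR(L)$, so there is a fixed $\delta>0$ such that for every finite-dimensional $W\ni p$ and every threshold there is an $\ep$-almost representation $\varphi:W\to\gl(V)$, with $\ep$ as small as desired, satisfying $\dim\operatorname{Im}\varphi(p)\ge\delta\dim V$. Fix an increasing chain $W_1\subseteq W_2\subseteq\cdots$ of finite-dimensional subspaces containing $p$ with $\bigcup_j W_j=L$, and for each $j$ pick such a $\varphi_j:W_j\to\gl(V_j)$ with error $\ep_j<1/j$ and $\rk\varphi_j(p)\ge\delta\dim V_j$; after replacing each $V_j$ by a direct sum of copies (which preserves both the error and the ratio $\rk\varphi_j(p)/\dim V_j$) we may assume $n_j:=\dim V_j\to\infty$. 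Fixing a non-principal ultrafilter $\omega$ on $\NN$, define $\Theta(x)=[(\varphi_j(x))_j]$, with $\varphi_j(x):=0$ when $x\notin W_j$.

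It remains to verify that $\Theta$ is a Lie homomorphism with $\Theta(p)\ne0$, which is now routine. Linearity holds since each $x$ lies in $W_j$ for all large $j$, a co-finite hence $\omega$-large set; the bracket is respected because for large $j$ we have $x,y,[x,y]\in W_j$, so condition (1) forces $\varphi_j([x,y])-[\varphi_j(x),\varphi_j(y)]$ to vanish on $V_{j,\ep_j}$ and hence to have rank at most $\ep_j n_j<n_j/j$, so $\rho_\omega$ of the difference is $0$; and $\rho_\omega(\Theta(p))=\lim_{j\to\omega}\tfrac1{n_j}\rk\varphi_j(p)\ge\delta>0$. The only point needing care beyond the countable-dimensional case is the exhausting chain, which can be replaced by a cofinal directed system still realizing the bound. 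The main conceptual obstacle throughout is the forward direction's linearization over $\ker\mu$: this is exactly what reduces the almost-representation condition on all of $W$ to finitely many negligible defects.
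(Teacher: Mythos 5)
Your proof is correct and follows the same two-step structure as the paper's: for the forward direction, restrict lifts of $\Theta$ to the subspace $W$ supplied by the definition of $SR(L)$ and show they are $\ep$-almost representations on an $\omega$-large set; for the converse, assemble a homomorphism from a covering chain of almost representations with errors tending to $0$. Your one genuine addition is the linearization of the defect over $\ker\mu\subset W\otimes W$, which carefully justifies the step the paper merely asserts (that a single $V_\ep$ works for all of the infinitely many pairs $x,y\in W$ with $[x,y]\in W$); otherwise the argument matches the paper's, including its implicit restriction to countable-dimensional $L$ in the converse, which you at least flag.
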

	
	\begin{proof}
		Suppose $p\in SR(L)$ and let $\Theta:L\to\prod_\omega \gl_{n_k}(F)/\rho_\omega$ be a Lie algebra homomorphism with lifts $\theta_k:L\to \gl_{n_k}(F)$.
		Fix $\delta>0$ and choose $n_\delta$ and finite dimensional $W\subset L$ from the definition of the sofic radical.
		Choose $0<\ep<n_\delta$.
		Then there exists $S\in\omega$ such that $\theta_k|_W$ is an $\ep$-almost representation for $W$ for every $k\in S$.
		Therefore $\dim\text{Im }\theta_k(p)<\delta n_k$.
		In other words, $\rho_{n_k}(\theta_k(p))<\delta$ for every $k\in H$.
		Therefore $\rho_\omega(\Theta(p))<\delta$.
		Since $\delta$ was arbitrary, we get that $\Theta(p)=0$.
		
		Now suppose that $p\in L\setminus SR(L)$.
		Then there exists $\delta>0$ such that for any finite dimensional $W\subset L$ containing $p$ and $n>0$, there exists $0<\ep<n$ and $\ep$-almost representation $\varphi:L\to\gl(V)$ such that $\dim\text{Im }\varphi(p)\geq\delta\cdot\dim V$.
		
		Let $W_k$ be an increasing sequence of finite dimensional subspaces of $L$ containing $p$ that cover $L$.
		Then there exists a sequence of $\ep_k$ that converges to 0 and $\ep_k$-almost representations $\theta_k:W_k\to\gl(V_k)$ such that $\dim\text{Im }\theta_k(p)\geq\delta\cdot\dim V_k$.
		Define a map $\hat\Theta:L\to\prod\gl(V_k)$ by $\hat\Theta(x)=(\hat\theta_k(x))$ where
		\[\hat\theta_k(x)=\begin{cases}
		\theta_k(x),& x\in W_k\\
		0,& x\in L\setminus W_k
		\end{cases}.\]
		Choose a non-principal ultrafilter $\omega$ of $\NN$ and let $\Theta:L\to\prod_\omega\gl(V_k)/\rho_\omega$ be the composition of $\hat\Theta$ with the quotient map.
		Then since $\ep_k\to0$, we have that $\Theta$ is a Lie algebra homomorphism and $\rho_\omega(\Theta(p))\geq\delta>0$.
	\end{proof}
	
	From this lemma, we get the following corollary
	\begin{cor}
		For a Lie algebra $L$, $SR(L)$ is an ideal.
		Moreover, $SR(L/SR(L))=(0)$.
	\end{cor}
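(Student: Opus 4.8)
The plan is to read off both assertions from the preceding Lemma, which identifies $SR(L)$ with the set of elements killed by every homomorphism into a universal sofic Lie algebra. Concretely, the Lemma says that $p \in SR(L)$ if and only if $\Theta(p) = 0$ for every Lie algebra homomorphism $\Theta : L \to \prod_\omega \gl_{n_k}(F)/\rho_\omega$; equivalently, $SR(L) = \bigcap_\Theta \ker\Theta$, where the intersection ranges over all such $\Theta$ (over all choices of non-principal ultrafilter $\omega$ and sequence $(n_k)$). Since the kernel of any Lie algebra homomorphism is an ideal, and an arbitrary intersection of ideals is again an ideal, this immediately gives that $SR(L)$ is an ideal. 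This is the first claim, and it is what makes the quotient $L/SR(L)$ a well-defined Lie algebra in the first place.

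For the second claim, I would write $I = SR(L)$ and let $\pi : L \to L/I$ be the quotient map. Take an arbitrary $\bar{p} \in SR(L/I)$, choose a lift $p \in L$ with $\pi(p) = \bar{p}$, and aim to show $p \in I$, whence $\bar{p} = 0$. To that end, fix any homomorphism $\Theta : L \to \prod_\omega \gl_{n_k}(F)/\rho_\omega$. By the Lemma, $I = SR(L) \subseteq \ker\Theta$, so $\Theta$ factors through $\pi$, yielding a homomorphism $\bar\Theta : L/I \to \prod_\omega \gl_{n_k}(F)/\rho_\omega$ with $\Theta = \bar\Theta \circ \pi$ and the same codomain. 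Since $\bar{p} \in SR(L/I)$, the Lemma applied to $L/I$ gives $\bar\Theta(\bar{p}) = 0$, and therefore $\Theta(p) = \bar\Theta(\pi(p)) = \bar\Theta(\bar{p}) = 0$. As $\Theta$ was arbitrary, the Lemma applied to $L$ forces $p \in SR(L) = I$, so $\bar{p} = 0$ in $L/I$. Hence $SR(L/I) = (0)$.

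This argument is essentially formal once the Lemma is in hand; the only point that requires care is that the factorization through $\pi$ preserves the codomain, so that the very same target $\prod_\omega \gl_{n_k}(F)/\rho_\omega$ is used when invoking the Lemma for $L$ and for $L/I$. I expect the main conceptual content to lie entirely in the Lemma's homomorphism-theoretic description of $SR$; the corollary then reduces to the standard facts that intersections of kernels are ideals and that a homomorphism killing $I$ descends to the quotient. In particular, no direct estimates with $\ep$-almost representations are needed here, since the Lemma has already converted the sofic radical into a kernel-intersection.
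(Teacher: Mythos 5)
Your proposal is correct and follows essentially the same route as the paper: both parts rest on the Lemma's identification $SR(L)=\bigcap_\Theta\ker\Theta$ together with the observation that any $\Theta$ killing $SR(L)$ descends to the quotient. The only difference is that you argue the second claim directly (every element of $SR(L/SR(L))$ lifts into $SR(L)$) while the paper argues the contrapositive (a nonzero element of the quotient admits a homomorphism not killing it); these are the same argument read in opposite directions.
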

	
	\begin{proof}
		The lemma show that
		\[SR(L)=\bigcap_{\Theta\in S}\ker\Theta\] where
		\[S=\left\{\Theta:L\to\prod_\omega \gl_{n_k}(F)/\rho_\omega\mid\Theta\text{ is a Lie algebra homomorphism}\right\}\] so it is clear that $SR(L)$ is an ideal.
		
		Now suppose $0\neq p\in L/SR(L)$ and let $q\in L$ be a pre-image of $p$.
		Then $q\notin SR(L)$ so there exists a Lie algebra homomorphism $\Theta:L\to\prod_\omega \gl_{n_k}(F)/\rho_\omega$ such that $\Theta(q)\neq0$.
		Since $SR(L)\subset\ker\Theta$, we can get a map $\hat\Theta$ by composing $\Theta$ with the quotient $L\to L/SR(L)$.
		Then we have that $\hat\Theta(p)=\Theta(q)\neq0$ so $p\notin SR(L/SR(L))$.
	\end{proof}
	
	We can now use the sofic radical to characterize sofic Lie algebras.
	\begin{theorem}
		A Lie algebra $L$ is sofic if and only if $SR(L)=(0)$.
	\end{theorem}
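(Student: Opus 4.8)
The plan is to read the two implications off the already-established Lemma and Corollary, with essentially all the work concentrated in the reverse direction.

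For the forward direction (sofic $\Rightarrow SR(L)=(0)$), suppose $L$ is sofic, so there is an injective Lie algebra homomorphism $\Theta_0\colon L\to\prod_\omega \gl_{n_k}(F)/\rho_\omega$. The corollary identifies $SR(L)=\bigcap_{\Theta\in S}\ker\Theta$, where $S$ is the set of all such homomorphisms. Since $\Theta_0\in S$ is one of the maps in the intersection and has trivial kernel, we get $SR(L)\subseteq\ker\Theta_0=(0)$. This direction is immediate.

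For the reverse direction ($SR(L)=(0)\Rightarrow$ sofic), the content is to manufacture a single injective homomorphism from the family the hypothesis guarantees. By the Lemma, for each $0\neq p\in L$ there is some $\Theta_p\in S$ with $\Theta_p(p)\neq0$; unwinding the definition of the sofic radical, this means there is a constant $\delta_p>0$ such that for every finite-dimensional $W\ni p$ and every $\ep>0$ there is an $\ep$-almost representation $\varphi\colon W\to\gl(V)$ with $\dim\operatorname{Im}\varphi(p)\ge\delta_p\dim V$. The strategy is to glue these local witnesses. First I would reduce to the countably generated case using that soficity is local: a subalgebra of a sofic algebra is sofic by restricting the embedding, and a directed union of sofic algebras is sofic by flattening the resulting double metric ultraproduct into a single one. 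Next, for a finite set $P=\{p_1,\dots,p_m\}\subset W\setminus\{0\}$ I would form the block-diagonal sum $\varphi=\bigoplus_i\varphi_i$ of $\ep$-almost representations $\varphi_i$ each good for $p_i$; this is again an $\ep$-almost representation, with $V_\ep=\bigoplus_i(V_i)_\ep$ so that the dimension deficit still sums to at most $\ep\dim V$, and it is simultaneously good on all of $P$. Finally I would index these block sums by the directed poset of triples $(W,P,\ep)$, choose a non-principal ultrafilter refining the order filter, and take the corresponding metric ultraproduct; since every fixed nonzero $p$ eventually lies in the chosen set $P$, its image acquires positive normalized rank in the limit, so the induced map $L\to\prod_\omega\gl_{n_k}(F)/\rho_\omega$ is injective.

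The main obstacle is normalization control in the gluing step. In the block-diagonal sum the element $p_i$ is witnessed only inside the summand $V_i$, so $\dim\operatorname{Im}\varphi(p_i)\ge\delta_{p_i}\dim V_i$ yields a ratio of order $\delta_{p_i}/m$ against $\dim V=\sum_j\dim V_j$, and this dilution factor degrades as $P$ grows. To keep the limit positive I must ensure that each fixed $p$ retains a lower bound on its normalized rank that does not decay along the ultrafilter, depending only on the intrinsic constant $\delta_p$ and not on $|P|$. I expect to handle this by balancing the block sizes and weighting copies so that, for each fixed $p$, the fraction of $V$ on which $p$ is witnessed stays bounded below once $p$ has entered the good set; alternatively, by passing through the associative algebra $M_{n_k}(F)$ and invoking the linear-soficity machinery of \cite{arz17}, where precisely this combination of local almost representations into a single embedding is carried out and can be transported to $\gl_{n_k}(F)$ via the commutator bracket. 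A secondary point to dispatch is the cardinality of the index set: although a countably generated $L$ still has uncountably many nonzero elements, the locality reduction together with the fact that injectivity of a linear map is detected on finite-dimensional subspaces lets me run the ultraproduct over a countable cofinal family of indices.
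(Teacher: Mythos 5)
Your forward direction matches the paper's (which simply calls it trivial), and your overall strategy for the converse --- combine the local witnesses into a single embedding via direct sums inside an ultraproduct --- is the same as the paper's. But the step you flag as ``I expect to handle this by balancing the block sizes and weighting copies'' is not a secondary obstacle to be dispatched later: it is the entire content of the paper's argument, and as written your proof has a genuine gap there. An unweighted block-diagonal sum over a growing finite set $P$ dilutes each witness to normalized rank of order $\delta_p/|P|$, which tends to $0$ along your directed poset, so the induced map into the ultraproduct need not be injective. You correctly diagnose this, but you do not produce the weighting, and the alternative you offer (citing the associative-algebra machinery of \cite{arz17}) is circular in spirit, since the analogous combination lemma there is proved by exactly the construction you are missing.

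For the record, the paper fills this gap in two steps. First it reduces the uncountable family $\{\Theta_p\}_{p\neq 0}$ to a countable one: fixing a basis $\{x_i\}$, it inductively builds $\Psi_m$ with $\ker\Psi_m\cap\spn\{x_1,\dots,x_m\}=(0)$, noting that at each stage the offending intersection is at most one-dimensional, so at most one new $\Theta_{y_m}$ needs to be adjoined via a plain direct sum. (This is cleaner than your appeal to ``injectivity is detected on finite-dimensional subspaces'': a finite-dimensional subspace still has uncountably many nonzero vectors, but its intersection with a kernel is a subspace, so it is killed one dimension at a time.) Second --- and this is the step you need --- it combines the countably many $\Psi_i$ with \emph{geometric weights}: after padding all components to a common size by tensoring with identities, the $k$-th approximant is
\[\varphi_k=\bigl(\hat\psi_{k,1}\otimes\Id_{2^{k-1}}\bigr)\oplus\bigl(\hat\psi_{k,2}\otimes\Id_{2^{k-2}}\bigr)\oplus\cdots\oplus\bigl(\hat\psi_{k,k}\otimes\Id_{1}\bigr)\oplus\Id,\]
so that the resulting $\Phi$ satisfies $\rho_\omega(\Phi(x))=\sum_{i}2^{-i}\rho_\omega(\Psi_i(x))$. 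Each fixed $x\neq 0$ has some $\Psi_i(x)\neq 0$ and therefore retains normalized rank at least $2^{-i}\rho_\omega(\Psi_i(x))>0$, independent of how many later maps are adjoined. This is precisely the ``fraction bounded below once $p$ has entered the good set'' that you hoped for; without exhibiting such a weighting scheme explicitly, your argument does not close.
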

	
	\begin{proof}
		The forward implication is trivial so we only show the reverse implication.
		
		Let $L$ be a Lie algebra such that $SR(L)=0$.
		Then for every $p\in L\setminus(0)$, there exists a Lie algebra homomorphism $\Theta_p:L\to\prod_\omega \gl_{n_{k,p}}(F)/\rho_\omega$ such that $\Theta_p(p)\neq0$.
		
		Let $\{x_i\}_{i\in\NN}\subset L$ be a basis for $L$ as a vector space.
		We shall construct maps $\Psi_m:L\to\prod_\omega \gl_{n_{k,m}}/\rho_\omega$ such that $\ker\Psi_m\cap\spn\{x_1,\dots,x_m\}=(0)$ for every $m\in \NN$.
		
		Let $\Psi_1=\Theta_{x_1}$.
		Now suppose for $m\in\NN_{\geq2}$, we have a map $\Psi_{m-1}$ as above.
		Then 
		\[\dim(\Psi_{m-1}\cap\spn\{x_1,\dots,x_m\})\leq1.\]
		If the dimension is 0, let $\Psi_m=\Psi_{m-1}$.
		Otherwise, choose a non-zero element $y_m$ in the intersection.
		In this case, we define $\Psi_m=\Psi_{m-1}\oplus\Theta_{y_m}$.
		Then if $z\in\ker\Psi_m\cap\spn\{x_1,\dots,x_m\}$, we have that $z\in\ker\Psi_{m-1}\cap\spn\{x_1,\dots,x_m\}$.
		Thus $z=\al y_m$ for some $\al\in F$.
		However, $z\in\ker\Theta_{y_m}$ so $z=0$.
		
		Now we construct a map $\Phi:L\to\prod_\omega \gl_{n_k}(F)/\rho_\omega$ such that $\ker\Phi\subset\bigcap\ker\Psi_m$.
		Let $n_k=n_{k,1}\cdots n_{k,k}$.
		Tensor each component of $\Psi_m$ with an appropriately sized identity matrix gives us maps $\hat\Psi_m:L\to\prod_\omega \gl_{\hat{n}_{k,m}}(F)/\rho_\omega$ where $\hat{n}_{k,m}=n_k$ if $m\leq k$ and $n_{k,m}$ otherwise.
		Let $\hat\psi_{k,m}:L\to \gl_{n_k}(F)$ be a lift of $\hat\Psi_m$ for $m\leq k$.
		
		Define a map $\varphi_k:A\to \gl_{2^kn_k}$ by
		\[\varphi_k=(\hat\psi_{k,1}\otimes\Id_{2^{k-1}}\oplus(\hat\psi_{k,2}\otimes\Id_{2^{k-2}})\oplus\cdots\oplus(\hat\psi_{k,k}\otimes\Id_1)\oplus\Id_{n_k}.\]
		Let $\Phi=\prod_\omega\varphi_k/\rho_\omega$.
		
		A direct calculation gives us that for any $x\in L$
		\begin{align*}
		\rho_\omega(\Phi(x))&=\lim_{k\to\omega}\frac{\rk(\varphi_k(x))}{2^kn_k^k}\\
		&=\lim_{k\to\omega}\frac{1}{2^kn_k^k}\sum_{i=1}^k\rk((\hat\psi_{k,1}\otimes\Id_{2^{k-1}})(x))\\
		&=\lim_{k\to\omega}\frac{1}{n_k^k}\sum_{i=1}^k\frac{\rk(\hat\psi_{k,1}(x))}{2^i}\\
		&=\lim_{k\to\omega}\sum_{i=1}^k\frac{\rk(\psi_{k,1}(x))}{2^in_{k,i}}\\
		&=\sum_{i=1}^k\frac{1}{2^i}\rho_\omega(\Psi_i(x))
		\end{align*}
		where $\psi_{k,m}:L\to \gl_{n_{k,m}}$ is a lift of $\Psi_m$.
		
		Therefore $\ker\Phi\subset\bigcap\ker\Psi_m=(0)$, so we have that $\Phi$ is injective.
	\end{proof}

	\section{Examples of Sofic Lie Algebras}
	By using the sofic radical, we can determine if particular Lie algebras is sofic.
	\begin{prop}
		Any abelian Lie algebra is sofic.
	\end{prop}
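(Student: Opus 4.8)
The plan is to invoke the immediately preceding theorem, which characterizes soficity by the vanishing of the sofic radical, and thereby reduce the statement to showing that $SR(L)=(0)$ for an abelian Lie algebra $L$. Since $0\in SR(L)$ automatically, it suffices to check that every nonzero $p\in L$ fails to lie in $SR(L)$. Unwinding the definition in its contrapositive form, $p\notin SR(L)$ means: there is a fixed $\delta>0$ such that for every finite dimensional $W\subset L$ containing $p$ and every bound $n>0$, there exist $0<\ep<n$ and an $\ep$-almost representation $\varphi:W\to\gl(V)$ with $\dim\text{Im }\varphi(p)\geq\delta\cdot\dim V$.

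The key observation is that the almost-representation conditions become trivial for abelian $L$, because the representation theory collapses. Since $[x,y]=0$ for all $x,y\in W$, any linear map $\varphi:W\to\gl(V)$ whose image consists of pairwise commuting matrices satisfies condition (1) of the definition \emph{exactly}: on all of $V$ we have $\varphi([x,y])=\varphi(0)=0$ and simultaneously $\varphi(x)\varphi(y)-\varphi(y)\varphi(x)=0$. Taking $V_\ep=V$ then exhibits $\varphi$ as an $\ep$-almost representation for \emph{every} $\ep>0$, so the bound $n$ imposes no real constraint.

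With this in hand I would argue as follows. Fix a nonzero $p$, an arbitrary finite dimensional $W\ni p$, and extend $p$ to a basis $\{p,w_2,\dots,w_d\}$ of $W$. Define $\varphi:W\to\gl(V)$ on this basis by $\varphi(p)=\Id_V$ and $\varphi(w_i)=0$ for $i\geq2$, extending linearly; the image is spanned by $\Id_V$ and hence commutative, so by the observation above $\varphi$ is an exact (thus $\ep$-almost for all $\ep>0$) representation. Because $\varphi(p)=\Id_V$ has full rank, $\dim\text{Im }\varphi(p)=\dim V$, so the required inequality holds with $\delta=1$. As $W$ and $n$ were arbitrary, $p\notin SR(L)$, and since $p$ ranged over all nonzero elements we conclude $SR(L)=(0)$. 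The theorem then yields that $L$ is sofic.

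I do not expect a genuine obstacle, since the commuting-image construction forces the almost-representation conditions to hold exactly rather than approximately; the only point demanding care is correctly reading off the contrapositive of the sofic radical definition. If one instead prefers to produce the ambient ultraproduct embedding directly, respecting the requirement that dimensions tend to infinity, it is enough to replace $V$ by $F^{n_k}$ and send $p\mapsto I_{n_k}$ into diagonal (hence commuting) matrices, which supplies the homomorphism $\Theta_p$ with $\Theta_p(p)\neq0$ guaranteed by the lemma.
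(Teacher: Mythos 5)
Your proposal is correct and follows essentially the same route as the paper: both reduce to showing $SR(L)=(0)$ via the preceding theorem, and both exhibit, for each nonzero $p$ and each finite dimensional $W\ni p$, an exact (hence $\ep$-almost for every $\ep$) representation sending $p$ to an invertible operator and a complement of $Fp$ to zero, using commutativity of the image to verify the bracket condition on all of $V$. The only cosmetic difference is that the paper takes $V=F\cong\gl_1(F)$ with $\varphi(p)=1$, whereas you take a general $V$ with $\varphi(p)=\Id_V$.
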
 
	
	\begin{proof}
		Suppose $L$ is an abelian Lie algebra over a field $F$ and suppose that $0\neq p\in L$.
		Then for any finite dimensional subspace $p\in W\subset L$ and $\ep>0$, there exists an $\ep$-almost representation $\varphi:W\to F\cong\gl_1(F)$ such that $\varphi(p)=1$ and $\varphi(W\setminus Fp)=0$.
		Thus $\dim\text{Im}\varphi(p)=1=\dim F$ so $p\notin SR(L)$.
	\end{proof}

	For groups, those of subexponential growth are amenable and therefore sofic.
	We show the same result holds true for Lie alebras.
	
	\begin{theorem}
		\label{thm:subexp}
		Any Lie algebra of subexponential growth is sofic.
	\end{theorem}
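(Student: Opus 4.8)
The plan is to prove the statement through the characterization that $L$ is sofic if and only if $SR(L)=(0)$, established earlier. So fix a nonzero $p\in L$; I must exhibit a single $\delta>0$ such that for every finite dimensional $W\subset L$ containing $p$ and every threshold there is an $\ep$-almost representation $\varphi$ of $W$ with $\dim\text{Im }\varphi(p)\geq\delta\cdot\dim V$. The first natural candidate, the adjoint representation, annihilates central elements and so cannot control $p$ in general; instead I would use the \emph{left regular representation of $L$ on its universal enveloping algebra} $U=U(L)$, truncated along the Poincar\'e--Birkhoff--Witt filtration. Recall that $L\hookrightarrow U$ lands in the degree-one piece $U_1$, that $U$ is an integral domain (its associated graded is the symmetric algebra on $L$), and, crucially, that by Smith \cite{smi76} subexponential growth of $L$ forces subexponential growth of $U$. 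Write $a_n=\dim U_n$ for the dimensions of the filtration pieces, which are finite since $L$ is finitely generated.

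Let $\pi_n\colon U\to U_n$ be the projection coming from a PBW-adapted basis, and for $x\in L$ define $\lambda_n(x)\in\gl(U_n)$ by $\lambda_n(x)(u)=\pi_n(xu)$. Given a finite dimensional $W\subset L$ (hence $W\subset U_1$ automatically), I claim that $\lambda_n|_W\colon W\to\gl(U_n)$ is an almost representation with $V_\ep:=U_{n-2}$. Indeed, for $u\in U_{n-2}$ and $x,y\in W$ every product occurring below already lies in $U_n$, so $\pi_n$ acts as the identity throughout and $\lambda_n(x)\lambda_n(y)(u)-\lambda_n(y)\lambda_n(x)(u)=xyu-yxu=(xy-yx)u=[x,y]u=\lambda_n([x,y])(u)$, using only associativity and the defining relation $[x,y]=xy-yx$ in $U$. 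Thus the bracket relation holds exactly on $U_{n-2}$, for \emph{all} such $x,y$, and the only remaining point is the size of the defect $\dim U_n-\dim U_{n-2}$.

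This is where subexponential growth enters, via a F{\o}lner-type estimate: if $a_n^{1/n}\to 1$ then for every $\ep>0$ and every fixed $r$ there are infinitely many $n$ with $a_n\leq(1+\ep)a_{n-r}$, since otherwise $a_{n_0+jr}>(1+\ep)^j a_{n_0}$ would force $a_n^{1/n}$ to be bounded below by $(1+\ep)^{1/r}>1$. Applying this with $r=2$ produces infinitely many levels $n$ at which $\dim U_{n-2}\geq(1-\ep)\dim U_n$, so $\lambda_n|_W$ is a genuine $\ep$-almost representation at those levels. For the rank lower bound, note that left multiplication by $p$ is injective on $U_{n-1}$ because $U$ is a domain, and coincides with $\lambda_n(p)$ there since $pU_{n-1}\subset U_n$; hence $\dim\text{Im }\lambda_n(p)\geq\dim U_{n-1}\geq\dim U_{n-2}\geq(1-\ep)\dim U_n$. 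Taking $\delta=\tfrac12$ and any $\ep<\tfrac12$ then gives $\dim\text{Im }\lambda_n(p)\geq\delta\cdot\dim U_n$ at the F{\o}lner levels, so $p\notin SR(L)$; as $p$ was an arbitrary nonzero element, $SR(L)=(0)$ and $L$ is sofic.

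The construction is characteristic-free: PBW, the domain property of $U(L)$, and Smith's growth bound all hold over an arbitrary field, so no hypothesis on the characteristic is needed here, in contrast to Theorem \ref{thm:ueasofic}. The main obstacle is conceptual rather than computational: one must supply a \emph{single} family of maps that is at once an almost homomorphism and uniformly nondegenerate on $p$. The adjoint representation delivers the first property for free but fails the second on the center, and it is precisely the passage to the faithful regular representation on $U(L)$, combined with the transfer of subexponential growth from $L$ to $U(L)$ via \cite{smi76}, that reconciles the two requirements.
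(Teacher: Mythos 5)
Your strategy coincides with the paper's: show $SR(L)=(0)$ by truncating the left regular representation of $L$ on $U(L)$, use subexponential growth to find F{\o}lner levels of a filtration where the truncation defect is small, and use the domain property of $U(L)$ to bound $\dim\text{Im}\,\varphi(p)$ from below. But there is a genuine gap in your choice of filtration. You truncate along the Poincar\'e--Birkhoff--Witt filtration, in which $L$ sits inside the degree-one piece $U_1$. Then $U_1\supseteq F+L$ is infinite dimensional whenever $L$ is --- which is the only case of interest --- so the $U_n$ are not finite dimensional, $\lambda_n$ does not land in $\gl(V)$ for any finite dimensional $V$, and $a_n=\dim U_n$ is not a finite quantity to which a growth estimate can be applied. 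Finite generation of $L$ as a Lie algebra does not make $U_1$ finite dimensional (the Witt algebra is a counterexample). There is also an internal inconsistency: Smith's theorem, which you invoke, concerns the growth of $U(L)$ with respect to word length in a finite generating set of $L$, not with respect to PBW degree, so the subexponential sequence it supplies is not your sequence $(a_n)$.

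The repair is exactly the paper's construction. Fix a finite generating set $X$ of $L$, let $V_n\subset L$ be the span of Lie words of length at most $n$ in $X$ and $W_m\subset U(L)$ the span of associative words of length at most $m$; these are finite dimensional, and $\gamma(m)=\dim W_m$ is subexponential by Smith's theorem. A given finite dimensional subspace of $L$ containing $p$ lies in some $V_n$, and elements of $V_n$ raise word length by at most $n$, so the good subspace for the truncated left multiplication $\varphi_{n,m}:V_n\to\gl(W_m)$ must be $W_{m-n}$ rather than your $U_{n-2}$. With that change your F{\o}lner argument goes through verbatim with gap $r=n$ in place of $r=2$, and the rest of your proof --- exactness of the bracket relation on the good subspace, injectivity of left multiplication by $p$ there, and the choice $\delta=\tfrac12$ --- is correct as written and matches the paper. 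Your closing observation that the argument is characteristic-free also agrees with the paper, which imposes no hypothesis on the characteristic in this theorem.
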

	
	\begin{proof}
		Let $L$ be a Lie algebra of subeponential growth generated by the set $X\subset L$.
		Let $V_n$ denote the words in $L$ of length at most $n$.
		We inductively create a basis for $L$ as follows.
		Let $\{x_1,\dots,x_{\dim V_1}\}$ be a basis for $V_1$.
		For $n\geq 2$, let $\{x_{\dim V_{n-1}+1},\dots,x_{\dim V_n}\}$ be the preimage in $L$ of a basis for $V_n/V_{n-1}$.
		We also define $W_n$ to be the subspace of $U(L)$ spanned by words of length at most $n$ and $\gamma(n)=\dim W_n$.
		
		For $m>n$, we define a linear map $\varphi_{n,m}:V_n\to \gl(W_m)$ where 
		\[\varphi_{n,m}(x_i)(x_{j_1}\cdots x_{j_k})=\begin{cases}
		x_ix_{j_1}\cdots x_{j_k},& x_ix_{j_1}\cdots x_{j_k}\in W_m\\
		0,& \text{otherwise}
		\end{cases}.\]
		We notice on that for $v\in W_{m-n}$ and $x,y\in V_n$,
		\[[\varphi_{n,m,k}(x),\varphi_{n,m,k}(y)](v)=\varphi_{n,m,k}([x,y])(v).\]
		
%
		Let us show that for a fixed $n$ that
		\[\lim_{m\to\infty}\frac{\gamma(m-n)}{\gamma(m)}=1.\]
		Since $L$ is of subexponential growth, we have that $\gamma$ is a function of subexponential growth from \cite{smi76}.
		To see this suppose that $f:\RR\to\RR$ is a function of subexponential growth and $d>0$.
		We have that if 
		\[\lim_{x\to\infty}\frac{f(x)}{f(x+d)}=\frac{1}{L}<1\]
		Then there exists $x_0\in\RR$ such that
		\[f(x_0+nd)\geq L^nf(x_0).\]
		Therefore $f(x)\geq L^\frac{x-x_0}{d-1}f(x_0)$ and is therefore not of subexponential growth.
		
		Thus for any $n\in\NN$ and $\ep>0$, we can find $\varphi_{n,m}$ such that 
		\[\frac{\dim W_m-\dim W_{n-m}}{\dim W_m}<1-(1-\ep)=\ep.\]
		
		Now suppose $p\in L\setminus\{0\}$, $V\subset L$ is a finite dimensional subspace containing $p$, and $\ep>0$.
		Then there exists $n\in\NN$ such that $V\subset V_n$.
		Thus for a sufficiently large $m\in\NN$, we have an $\ep$-almost representation $\psi:=\varphi_{n,m}|_{V}$ for $V$.
		On $W_{m-n}$, we have that $\psi(p)$ works like left multiplication by $p$, considered as an element of $U(L)$.
		Thus $\dim\text{Im}(\psi(p))\geq\dim W_{m-n}$.
		By our choice of $m$ to make $\psi$ an $\ep$-almost representation, we have that
		\[\dim W_{m-n}>(1-\ep)\dim W_m.\]
		Hence the normalized rank of $\psi(p)$ is greater than $1-\ep$.
		
		Thus for any non-zero $p\in L$, any finite dimensional subspace $p\in V\subset L$, and $\ep>0$, we can find an $\ep$-almost representation $\psi$ for $V$ such that the normalized rank of $\psi(p)$ is at least 1/2.
		Specifically, we can choose $\psi$ to be a $\min\{\ep,1/2\}$-almost representation.
		Therefore $p\notin SR(L)$ so $SR(L)=(0)$ and $L$ is sofic.
	\end{proof}

	We now provide explicit families of almost representations for two particular Lie algebras.
	Though both of the examples are of subexponential growth, their families of almost homomorphisms are not based on their subspaces of words of particular length.
	
	\begin{ex}
	The Witt algebra $L_W$, which is the Lie algebra of derivations for $\CC[t,t^{-1}]$, is sofic.
	\end{ex}
	We can cover $L_W$ by the finitely dimensional subspaces
	\[V_n=\sum_{i=-n}^n\CC x_i\] where $x_i=-t^{n+1}\frac{d}{dt}$.
	We also consider the finite dimensional vector spaces
	\[W_n=\sum_{i=-n}^n\CC t^i\subset\CC[t,t^{-1}].\]
	For $m\geq n$, define a linear map $\varphi_{n,m}:V_n\to\gl(W_m)$ via
	\[\varphi_{n,m}(x_i)(t^j)=\begin{cases}
	-jt^{i+j},& j\leq m-i\\
	0,& \text{otherwise}
	\end{cases}\]
	and
	\[\varphi_{n,m}(x_{-i})(t^j)=\begin{cases}
	-jt^{j-i},& j\geq i-m\\
	0,&\text{otherwise}
	\end{cases}\] for $i\geq0$.
	For any $n\in\NN$ and $\ep>0$, $m$ can be made sufficiently large so as to make $\varphi_{n,m}$ and $\ep$-almost homomorphism.
	
	\begin{ex}
		The Virasoro algebra $L_V$, which is the unique central extension of $L_W$, is sofic.
	\end{ex}

	To construct the family of almost representations, we need to use the Verma modules of $L_V$.
	For more information on the Verma modules, see \cite{wak86}.
	
	We have a basis of $L_V$ given by the $x_i$'s from $L_W$ and a central element $c$.
	Define the subspaces $\hh=\CC x_0+\CC c$ and 
	\[\nn_+=\sum_{k=1}^\infty\CC x_k.\]
	Given $\lambda\in\hh^*$, we define the Verma module to be the space
	\[M(\lambda)=U(L_V)\otimes_{U(\hh\oplus\nn_+)}\CC\] where the action of $\hh\oplus\nn_+$ on $\CC$ is given by
		\[(h+x)\cdot\al=\lambda(h)\al.\]
	For $m,d\in\NN$, we consider the subspaces
	\[M(\lambda)_{m,d}=\text{span}\{x_{-i_1}\cdots x_{-i_k}\otimes1\mid0\leq k\leq r, 0\leq i_k\leq\cdots\leq i_1\leq m\}.\]
	We then define linear maps $\varphi_{n,m,d}:V_n+\CC c\to\gl(M(\lambda)_{m,d})$ via $\varphi_{n,m,d}(c)=\id_{M(\lambda)_{m,d}}$ and
	\[\varphi_{n,m,d}(x_r)(x_{-i_1}\cdots x_{-i_k})=\begin{cases}
	0, & k=d\text{ or }r\geq i_1-m\\
	x_kx_{-i_1}\cdots x_{-i_k},& \text{otherwise}
	\end{cases}.\]
	Just as in the case with $L_W$, for a fixed $n,d\in\NN$ and $\ep>0$, we can choose $m$ large enough to make $\varphi_{n,m,d}$ an $\ep$-almost representation.

	\section{Soficity of Universal Enveloping Algebras}
	In \cite{arz17}, it was shown that a group is linearly sofic if and only if its corresponding group algebra is itself linearly sofic.
	For Lie algebras, we have a similar construction with universal enveloping algebras.
	This section is spent on showing equivalence of soficity for Lie algebras and universal enveloping algebras.
	
	We first need the following result from \cite{arz17}.
	\begin{prop}
		Let $A$ be a unital algebra and $\omega\subset\mathscr{P}(\NN)$ an ultrafilter.
		If we have a set $\{\Theta^i\}_{i\in I}$ of unital algebra morphisms from $A$ to $\prod_\omega M_{n_{k,i}}(F)/\rho_\omega$, then there exists a unital algebra homomorphism $\psi:A\to\prod_\omega M_m(F)/\rho_\omega$ such that \[\ker\psi\subset\bigcap_{i\in I}\ker\Theta^i.\]
	\end{prop}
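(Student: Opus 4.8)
The plan is to amalgamate the given homomorphisms into a single one by the tensor-and-direct-sum device already used to build the map $\Phi$ in the proof that $SR(L)=(0)$ implies soficity. The guiding principle is that the normalized rank converts a block-diagonal direct sum into a weighted sum of the ranks of the blocks, so that a suitably weighted direct sum of the $\Theta^i$ has kernel precisely the intersection of their kernels. Because every $\Theta^i$ is already an honest homomorphism of the ultraproducts, and tensoring with identity matrices and forming direct sums are natural operations compatible with the rank quotient, the amalgam descends to a genuine unital homomorphism $\psi$, with no asymptotic multiplicativity error to control.

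First I would combine finitely many maps $\Theta^1,\dots,\Theta^r$. Choosing lifts $\theta^i_k:A\to M_{n_{k,i}}(F)$, I tensor each $\theta^i_k$ with an identity matrix so that all $r$ blocks acquire the common size $n_{k,1}\cdots n_{k,r}$, and place them block-diagonally inside $M_{m_k}(F)$ with $m_k=r\,n_{k,1}\cdots n_{k,r}$. Since $\rk(a\otimes\Id_c)=c\cdot\rk(a)$ and rank is additive over direct summands, the resulting $\psi$ satisfies $\rho_\omega(\psi(x))=\frac1r\sum_{i=1}^r\rho_\omega(\Theta^i(x))$; as this vanishes exactly when each term does, $\ker\psi=\bigcap_{i=1}^r\ker\Theta^i$. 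Next I would pass to a countable family $\Theta^1,\Theta^2,\dots$ by weighting the $i$-th block by $2^{-i}$, exactly as in the construction of $\Phi$: in slot $k$ I place $\bigoplus_{i=1}^{k}(\theta^i_k\otimes\Id_{c_{k,i}})$ with geometrically decaying multiplicities $c_{k,i}$ realizing these weights, padding the remaining $2^{-k}$ fraction with an extra homomorphic block so that $\psi$ stays unital. The same bookkeeping gives $\rho_\omega(\psi(x))=\sum_{i=1}^\infty 2^{-i}\rho_\omega(\Theta^i(x))$, a convergent series vanishing if and only if every $\rho_\omega(\Theta^i(x))$ does, whence $\ker\psi=\bigcap_i\ker\Theta^i$.

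The main obstacle is reducing the arbitrary index set $I$ to a countable one before applying the construction. Writing $J=\bigcap_{i\in I}\ker\Theta^i$ and passing to $A/J$, the images of the ideals $\ker\Theta^i$ intersect to $0$. When $A/J$ has countable dimension, which is the case in the intended application where $A=U(L)$ for a countably generated $L$, I would exhaust $A/J$ by finite-dimensional subspaces $V_1\subseteq V_2\subseteq\cdots$; finite-dimensionality of each $V_n$ forces a descending intersection of the $\ker\Theta^i/J$ to meet $V_n$ in $0$ after finitely many terms, so some finite $F_n\subseteq I$ satisfies $\bigcap_{i\in F_n}(\ker\Theta^i/J)\cap V_n=0$. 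Then $I'=\bigcup_n F_n$ is countable with $\bigcap_{i\in I'}\ker\Theta^i=J$, and the countable construction over $I'$ delivers $\psi$ with $\ker\psi=J\subseteq\bigcap_{i\in I}\ker\Theta^i$. I expect this reduction, not the rank computation, to be the delicate point: without the countability of $A/J$ that the applications supply, no countable subfamily need reproduce the intersection, and the amalgamation method breaks down.
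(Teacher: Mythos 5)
The paper does not actually prove this proposition --- it is imported verbatim from \cite{arz17} --- so the only internal comparison available is with the construction of $\Phi$ in the proof that $SR(L)=(0)$ implies soficity, and your argument is exactly that construction transplanted to the associative setting: equalize block sizes by tensoring lifts with identities, form a direct sum with geometrically decaying weights, and use $\rho_\omega(\psi(x))=\sum_i 2^{-i}\rho_\omega(\Theta^i(x))$ to read off $\ker\psi=\bigcap_i\ker\Theta^i$. The rank bookkeeping is right, the multiplicativity and unitality defects of the lifts do vanish in the ultralimit (the tail of the weighted sum is uniformly small, so the limit and the sum can be exchanged), and in fact even the padding block need not be homomorphic since its relative size is $2^{-k}\to 0$. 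Your reduction of an arbitrary index set to a countable one via exhaustion of $A/J$ by finite-dimensional subspaces is also correct (a downward-directed family of subspaces of a finite-dimensional space stabilizes at its intersection). The one honest caveat you raise --- that the statement for uncountable $I$ and uncountable-dimensional $A/J$ is not reached by this method --- is real if one insists on an ultrafilter on $\NN$, but it is immaterial here: in the only application (Theorem \ref{thm:ueasofic}) the family $\{\Theta^i\}$ is indexed by $\NN$, so your countable construction already suffices. In short, the proposal is correct for everything the paper needs and is methodologically the same amalgamation device the paper itself uses elsewhere.
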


	We can now tackle our main result:
	\begin{theorem}
		\label{thm:ueasofic}
		Let $L$ be a Lie algebra over a field $F$ of characteristic 0.
		Then $L$ is sofic if and only if $U(L)$ is sofic.
	\end{theorem}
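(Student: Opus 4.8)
The plan is to pass back and forth between Lie homomorphisms into $\prod_\omega\gl_{n_k}(F)/\rho_\omega$ and unital algebra homomorphisms into $\prod_\omega M_{n_k}(F)/\rho_\omega$, using the universal property of $U(L)$ together with the amalgamation Proposition just stated. The backward direction is immediate: if $U(L)$ is sofic there is an injective unital algebra homomorphism $\psi:U(L)\to\prod_\omega M_{n_k}(F)/\rho_\omega$, and since the target is associative, $\psi$ carries commutators to commutators. Restricting $\psi$ to the canonical copy $L\hookrightarrow U(L)$ (injective by PBW) gives a Lie homomorphism $L\to\left(\prod_\omega M_{n_k}(F)/\rho_\omega\right)^{(-)}=\prod_\omega\gl_{n_k}(F)/\rho_\omega$ which is injective as a composite of injections, so $L$ is sofic.

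For the forward direction, suppose $L$ is sofic, so $SR(L)=(0)$; by the theorem characterizing soficity we obtain an injective Lie homomorphism $\Phi:L\hookrightarrow\prod_\omega\gl_{n_k}(F)/\rho_\omega$. Write $A:=\prod_\omega M_{n_k}(F)/\rho_\omega$, so that $\Phi:L\hookrightarrow A^{(-)}$, and extend it by the universal property to a unital algebra homomorphism $\widetilde\Phi:U(L)\to A$. A single $\widetilde\Phi$ need not be injective, so I enrich the family using the Hopf structure: for each $r\in\NN$ the iterated coproduct gives a Lie homomorphism $\Phi^{(r)}:L\to\prod_\omega\gl_{n_k^r}(F)/\rho_\omega$, $x\mapsto\sum_{j=1}^r 1^{\otimes(j-1)}\otimes\Phi(x)\otimes1^{\otimes(r-j)}$ (this is again a Lie homomorphism since placements in distinct slots commute, and $M_{n_k}(F)^{\otimes r}=M_{n_k^r}(F)$), and scaling by $t\in F^\times$ keeps $t\Phi$ a Lie homomorphism. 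Extending each, I obtain a family of unital algebra homomorphisms $\left\{\widetilde{(t\Phi)^{(r)}}:U(L)\to\textstyle\prod_\omega M_{n_k^r}(F)/\rho_\omega\right\}_{t\in F^\times,\,r\in\NN}$. After replacing $\Phi$ by $\Phi\oplus c\Phi$ for a suitable $c\in F\setminus\{0,1\}$ I may also assume $\Phi(L)\cap F\cdot1=(0)$.

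The crux is the separation lemma $\bigcap_{t,r}\ker\widetilde{(t\Phi)^{(r)}}=(0)$; granting it, the Proposition produces a single unital algebra homomorphism $\psi:U(L)\to\prod_\omega M_m(F)/\rho_\omega$ with $\ker\psi$ inside this intersection, hence injective, and $U(L)$ is sofic. To prove the lemma, fix $0\neq u\in U(L)$ of filtration degree $D$ with nonzero top part $u_D$ and PBW symbol $\sigma(u_D)\in S^D(L)$. For fixed $r$ one has
\[\widetilde{(t\Phi)^{(r)}}(u)=\sum_{d=0}^{D}t^d\,\widetilde{\Phi^{(r)}}(u_d),\]
so by a Vandermonde argument over the infinite field $F$ it suffices to find $r$ with $\widetilde{\Phi^{(r)}}(u_D)\neq0$. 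Writing $A=F\cdot1\oplus A_0$ and grading the tensor power as $A^{\otimes r}=\bigoplus_{T\subseteq\{1,\dots,r\}}A_0^{(T)}$ by the set $T$ of slots lying in $A_0$, I expand $\widetilde{\Phi^{(r)}}(u_D)$ over slot-assignments: a monomial that reuses a slot occupies at most $D-1$ slots and so only contributes to components with $|T|\leq D-1$, while the $|T|=D$ component is fed exactly by the assignments using $D$ distinct slots and equals, over each $D$-element subset $T$, the full symmetrization of the images of the letters of $\sigma(u_D)$ under the projection $\pi:A\to A_0$.

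The lemma therefore reduces to showing this symmetrized element is nonzero: since distinct supports $T$ land in independent summands, it suffices that the composite $S^D(L)\xrightarrow{S^D(\pi\circ\Phi|_L)}S^D(A_0)\to A_0^{\otimes D}$ does not kill $\sigma(u_D)$. Here $\pi\circ\Phi|_L$ is injective (using $\Phi(L)\cap F\cdot1=(0)$), so its $D$-th symmetric power is injective; and in characteristic $0$ the symmetrization map $S^D(A_0)\to A_0^{\otimes D}$ is injective, whereas its failure in positive characteristic—through the vanishing of multinomial coefficients—is exactly why the hypothesis is needed. I expect the main obstacle to be this last step: carefully setting up the support grading so that repeated-slot terms cannot cancel the symmetrized leading term, and establishing the symmetric-power and symmetrization injectivity that make $\widetilde{\Phi^{(r)}}(u_D)\neq0$. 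Once this is in hand, scaling disposes of non-homogeneous $u$ and the amalgamation Proposition completes the proof.
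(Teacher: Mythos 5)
Your overall strategy is the same as the paper's: form the iterated-coproduct maps $\Phi^{(r)}(x)=\sum_{j}1^{\otimes(j-1)}\otimes\Phi(x)\otimes1^{\otimes(r-j)}$, extend them to unital algebra homomorphisms on $U(L)$, feed the family into the amalgamation Proposition, and prove triviality of the common kernel by showing that the fully symmetrized top-degree pure tensor survives, with characteristic $0$ entering through the injectivity of symmetrization $S^D\to(\cdot)^{\otimes D}$. Your bookkeeping for isolating the top-degree part (writing $A=F\cdot 1\oplus A_0$ and grading $A^{\otimes r}$ by the support $T$ of slots landing in $A_0$) is cleaner than the paper's, which instead tensors the equation $\tdth^{D-1}(a)=0$ by $1$ in all $D$ slots and subtracts the sum from $\tdth^{D}(a)=0$; and your insistence on $\Phi(L)\cap F\cdot1=(0)$ addresses a point the paper passes over (a basis vector mapping to a scalar would make some ``high order'' pure tensors coincide with low order ones and spoil the linear-independence step).

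There is, however, a concrete error: for $t\neq0,1$ the map $t\Phi$ is \emph{not} a Lie algebra homomorphism, since $[t\Phi(x),t\Phi(y)]=t^2\Phi([x,y])\neq t\Phi([x,y])$ unless $\Phi([x,y])=0$. Consequently the family $\widetilde{(t\Phi)^{(r)}}$ does not exist, the identity $\widetilde{(t\Phi)^{(r)}}(u)=\sum_d t^d\,\widetilde{\Phi^{(r)}}(u_d)$ is meaningless, and the Vandermonde step collapses; the same defect breaks $\Phi\oplus c\Phi$ (which satisfies $[\cdot,\cdot]$ only when $c^2=c$) as a device for arranging $\Phi(L)\cap F\cdot1=(0)$. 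Fortunately both uses are inessential to your scheme. The support grading already does the Vandermonde's job: a PBW monomial of degree $d<D$ occupies at most $d$ slots in $A_0$, so $u_0,\dots,u_{D-1}$ contribute nothing to the $|T|=D$ components of $\widetilde{\Phi^{(r)}}(u)$ for $r\geq D$, and it suffices that $u_D$ alone produce a nonzero $|T|=D$ component --- which is exactly your symmetrization computation. For the normalization, note that $\Phi^{-1}(F\cdot1)$ is a central subspace of dimension at most one, and it can be killed by replacing each $\varphi_k$ with $\varphi_k\oplus 0_{n_k}$ (a zero block of proportional size), which is still a Lie homomorphism, still injective into a universal sofic Lie algebra, and whose image meets $F\cdot1$ only in $0$ because the padded block forces normalized distance at least $1/2$ from any nonzero scalar. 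With those two repairs your proof goes through and is, if anything, tighter than the paper's.
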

	\begin{proof}
		Since $L$ embeds in $U(L)$ the reverse direction is trivial.
		Thus we only prove the forward direction.
		
		Since $L$ is sofic, there exists a Lie algebra homomorphism
		\[\Theta:L\to\prod_\omega \gl_{n_k}(F)/\rho_\omega=:A.\]
		Let $\Theta_1=\Theta$ and 
		\[\Theta_j=\underbrace{1\otimes\cdots\otimes1}_{j-1\text{ times}}\otimes\Theta\]
		for $j\geq2$.
		Define a new Lie algebra homomorphism $\Theta^i:L\to\bigotimes_{j=1}^iA$ by $\Theta^0=0$, $\Theta^1=\Theta$, and $\Theta^i=\Theta^{i-1}\otimes 1+1\otimes\Theta_{i-1}$.
		Since finite tensor product of matrix rings are again matrix rings, we use the proposition to get a unital algebra homomorphism
		\[\psi:U(L)\to\prod_{\omega}{M_m(F)}/\rho_\omega\] such that
		\[\ker\psi\subset\bigcap_{i=0}^\infty\tdth^i.\]
		We claim that $\psi$ is injective.
		
		Let $\{x_j\}_{j\in J}\subset L$ be a basis.
		First suppose that $x_{j_1}\cdots x_{j_d}\in\ker\psi$ is a monomial for $d\geq2$.
		Then by construction
		\[\tdth^i(x_{j_1}\cdots x_{j_d})=0\] for all $i\geq1$.
		Notice that $\Theta^d(x_{j_1}\cdots x_{j_d})$ will consist of a linear sum of pure tensors of $1$ and products of the $\Theta(x_{j_i})$'s.
		In fact, we have two sets of pure tensors: one set consisting of those with at least one 1 and at least one degree 2 or greater monomial and the other set consisting of pure tensors with only linear monomials and no 1's, i.e. pure tensors of the form $x_{j_{\sigma(1)}}\otimes\cdots\otimes x_{j_{\sigma(d)}}$ for some $\sigma\in S_d$.
		In fact we get exactly one such pure tensor for every $\sigma\in S_d$.
		Call the first set the lower order tensors and the second set the high order tensors.
		
		Now notice that $\tdth^{d-1}(x_{j_1}\cdots x_{j_d})$ is a sum of low order tensors of $\tdth^d(x_{j_1}\cdots x_{j_d})$ with one of the 1's removed.
		Thus, viewing $0$ as $0\otimes0\otimes\cdots\otimes0$ ($d-1$ times), we can tensor the equation
		\[\tdth^{d-1}(x_{j_1}\cdots x_{j_d})=0\] by 1 in $d$ spots and adding up all the new equations shows that the low order tensors of $\tdth^d(x_{j_1}\cdots x_{j_d})$ sum to 0.
		Therefore, we get that
		\[\sum_{\sigma\in S_d}\Theta(x_{j_{\sigma(1)}})\otimes\cdots\otimes \Theta(x_{j_{\sigma(d)}})=0.\]
		
		Now suppose that $i>d$.
		Then we can see that in the sum of pure tensors in $\tdth^i(x_{j_1}\cdots x_{j_d})$, every pure tensor has at least one 1 in it.
		Thus repeating the same process of the tensor the equation $\tdth^{i-1}(x_{j_1}\cdots x_{j_d})=0$ with $1$ in $i$ spots and summing up, we get back to the equation $\tdth^i(x_{j_1}\cdots x_{j_d})=0$.
		
		Now we are ready to prove injectivity of $\psi$.
		Suppose that $a\in\ker\psi\setminus\{0\}$.
		Put an order on the indexing set $J$ of the basis for $L$.
		Then by the Poincar\'e-Birkoff-Witt Theorem, we can uniquely write $a$ as
		\[a=\sum\limits_{i=1}^n\al_ix_{j^{(i)}_1}^{e^{(i)}_1}\cdots x_{j^{(i)}_{d_i}}^{e^{(i)}_{d_i}}+\al_0\] where $\al_i\in F$, $\al_i\neq0$ for $i\geq1$, $e^{(i)}_j\geq1$ for every $i,j$, $d_i\geq1$, and $j^{(i)}_k< j^{(i)}_{k+1}$.
		By applying $\tdth^0$, we get rid of every term but $\al_0$, so we see that $\al_0=0$.
		Let 
		\[D_i=\sum\limits_{k=1}^{d_i}e^{(i)}_k\] be the degree of the $i$-th monomial and let $D=\max\{D_i\}_{1\leq i\leq n}$.
		Since we know that $\Theta$ is injective on $L$, we get a contradiction if $D=1$.
		Thus assume that $D\geq2$.
		
		Repeating the same process we did on monomials, take the equation $\tdth^{D-1}(a)=0$, tensor it with 1 in the $D$ possible spots and add up all the equations.
		Then we subtract that sum from the equation $\tdth^{D}(a)=0$.
		For every $i$ such that $D_i<D$, the terms coming from the $i$-th monomial will vanish in the new equation.
		Thus we are on left with the monomials of degree exactly $D$.
		The equation we are left with is a linear combination of pure tensors of the form $\Theta(x_{j_1})\otimes\cdots\otimes\Theta(x_{j_D})$.
		Since our monomials are ordered, the only way to have such pure tensors are equal is if they come from the same original monomial.
		Therefore we have a non-trivial linear combination.
		However, $\{\Theta(x_j)\}_{j\in J}\subset A$ is linearly independent since $\Theta$ is injective, so all the weights in the linear combination must be 0.
		Since $F$ is characteristic, we get that $\al_i=0$ for every $i$ such that $D_i=D$.
		This is a contradiction so no such $a$ exists and $\ker\psi=(0)$.
	\end{proof}
	
	\begin{remark}
		This equivalence of soficity is particularly useful when combined with Proposition 11.6 from \cite{arz17} stating amenable algebras without zero divisors are sofic.
		A particular application is a simplification of Theorem \ref{thm:subexp} for Lie algebras over fields of characteristic 0.
		Indeed, if $L$ is such a Lie algebra, by Theorem 7 of \cite{smi76}, $U(L)$ is also of subexponential growth.
		From \cite{ele03}, we see that $U(L)$ is amenable algebra with no zero divisors.
		Therefore $U(L)$, and thus $L$, is sofic.
	\end{remark}
	
	\newpage
	\bibliographystyle{alpha}
	\bibliography{SoficPaperbib}
		
\end{document}